\documentclass[11pt]{amsart}
\usepackage[psamsfonts]{amssymb}
\usepackage{amsthm,amsmath}

\title{Polynomial functions over bounded distributive lattices}

\author{Miguel Couceiro}
\address{Mathematics Research Unit, FSTC, University of Luxembourg, 6, rue Coudenhove-Kalergi, L-1359 Luxembourg-Kirchberg, Luxembourg.} \email{miguel.couceiro[at]uni.lu}

\author{Jean-Luc Marichal}
\address{Mathematics Research Unit, FSTC, University of Luxembourg, 6, rue Coudenhove-Kalergi, L-1359 Luxembourg-Kirchberg, Luxembourg.} \email{jean-luc.marichal[at]uni.lu }

\date{December 2, 2009}

\begin{document}
\maketitle

\theoremstyle{plain}
\newtheorem{theorem}{Theorem}
\newtheorem{lemma}[theorem]{Lemma}
\newtheorem{proposition}[theorem]{Proposition}
\newtheorem{corollary}[theorem]{Corollary}
\newtheorem{fact}[theorem]{Fact}
\newtheorem*{main}{Main Theorem}

\theoremstyle{definition}
\newtheorem{definition}[theorem]{Definition}
\newtheorem{example}[theorem]{Example}

\theoremstyle{remark}
\newtheorem{claim}{Claim}

\newcommand{\card}[1]{\ensuremath{\lvert{#1}\rvert}}
\newcommand{\vect}[1]{\ensuremath{\mathbf{#1}}} 
\newcommand{\co}[1]{\ensuremath{\overline{#1}}}
\newcommand{\ran}{\mathcal{R}}
\def\med{\mathop{\rm med}\nolimits}

\begin{abstract}
Let $L$ be a bounded distributive lattice. We give several characterizations of those $L^n \to L$ mappings that are polynomial functions, i.e.,
functions which can be obtained from projections and  constant functions using binary joins and meets. Moreover, we discuss the disjunctive
normal form representations of these polynomial functions.
\end{abstract}

\noindent{\bf Keywords:} Distributive lattice;  polynomial function;  normal form; functional equation.

\section{Introduction}

Let $(L;\wedge,\vee)$ be a lattice. With no danger of ambiguity, we denote lattices by their universes. By a (\emph{lattice}) \emph{polynomial function} we simply mean a map  $f\colon L^n \to L$ which can be obtained by
composition of the binary operations $\wedge$ and $\vee$, the projections, and the constant functions; see, e.g., page 93 in \cite{BurSan81}. If
constant functions are not used, then these polynomial functions are usually referred to as \emph{term functions}. For general background, see \cite{BurSan81,Denecke,Denecke2,Grae03}

For a finite lattice $L$, the set of all polynomial functions on $L$ is well understood. Indeed, Kindermann \cite{Kind78} reduces the problem of
describing polynomial functions to tolerances, and reasonable descriptions for the latter have been provided in Czedli and Klukovits
\cite{CzeKlu83} and Chajda \cite{Chajda}.

The goal of the current paper is to present a more direct approach to polynomial functions and provide alternative descriptions, different in
nature and flavor, in the case when $L$ is distributive, with 0 and 1 as bottom and top elements. Notably enough, instead of finiteness it
suffices to assume that $L$ is a bounded distributive lattice. So, throughout the paper,
we assume that $L$ is a bounded distributive lattice.
 Also, functions that are not order-preserving cannot be polynomial functions. Thus, our
main result focuses on order-preserving functions.

We shall make use of the following notation. The ternary median term $(x\vee y)\wedge (x\vee z)\wedge (y\vee z )$ will be denoted by
$\med(x,y,z)$. For $c\in L$, the constant tuple $(c, \ldots , c)$ in $L^n$ will be denoted by $\overline{c}$. For $k\in [n]=\{1,\ldots,n\}$, $a\in L$,
and $\vect{x}=(x_1,\ldots,x_n)\in L^n$, let $\vect{x}_k^a$ be the tuple in $L^n$ whose $i$th component is $a$, if $i=k$, and $x_i$, otherwise.
Let $[\vect{x}]_c$ (resp.\ $[\vect{x}]^c$) denote the $n$-tuple whose $i$th component is $0$ (resp.\ $1$), if $x_i\leqslant c$ (resp.\
$x_i\geqslant c$), and $x_i$, otherwise.

The range of a function $f\colon L^{n}\rightarrow L$ is defined by $\ran(f)=\{f(\vect{x}) : x\in L^n\}$. For $k\in [n]$ and
$\vect{a}=(a_1,\ldots,a_n)\in L^n$, we also define the unary function $f_{-k}^{\vect{a}}\colon L\to L$ as
$f_{-k}^{\vect{a}}(x)=f(\vect{a}_k^x)$.

Note that if $f\colon L^{n}\rightarrow L$ is a polynomial function, then every unary function $g$ obtained from $f$ by substituting constants for $n-m$ of its variables and identifying the remaining $m$ variables is also a polynomial function. In fact, it is not difficult to see that such a function $g$ is of the form  $\med (a,x,b)$, for some $a,b\in L$, and thus every such function $g$ preserves $\wedge$ and $\vee$. In particular, we have the following fact.

\begin{fact}\label{fact1}
Every polynomial function $f\colon L^n\to L$ satisfies condition (\ref{eq:deltaFG}) and its dual, where
\begin{equation}\label{eq:deltaFG}
f(\vect{x}_k^{a\wedge b})=f(\vect{x}_k^a)\wedge f(\vect{x}_k^b)\quad \mbox{for all }\vect{x}\in L^n,~a,b\in L,\mbox{ and }k\in [n].
\end{equation}
Moreover, every function $f\colon L^n\to L$ satisfying (\ref{eq:deltaFG}) or its dual is order-preserving.
\end{fact}

The following result reassembles the various characterizations of polynomial functions provided in this paper, and its proof is given in Section
2.
\begin{main} \label{mainChar}
Let $L$ be a bounded distributive lattice and $f\colon L^{n}\rightarrow L$, $n\geqslant 1$, be an order-preserving function. The following
conditions are equivalent:
\begin{enumerate}
\item[$(i)$] $f$ is a polynomial function;

\item[$(ii)$] for every $\vect{x} \in L^n$ and $k\in [n]$,
\begin{align}\label{MedDecomposition}
 f(\vect{x})=\med\big(f(\vect{x}^{0}_{k}), x_k, f(\vect{x}^{1}_{k})\big);
\end{align}

\item[$(iii)$] $f$ satisfies (\ref{eq:deltaFG}) and its dual, the sets $\ran(f)$ and $\ran(f_{-k}^{\vect{a}})$, for every $\vect{a}\in L^n$ and
$k\in [n]$, are convex, and for every $\vect{x} \in L^n$ and $k\in [n]$,
\begin{equation}\label{eq:StrId}
f(\vect{x}^{f(\vect{x})}_{k})=f(\vect{x});
\end{equation}

\item[$(iv)$]  $f$ satisfies condition (\ref{eq:homogeneity}) and its dual, where
\begin{equation}\label{eq:homogeneity}
f(\vect{x}\wedge \overline{c}) = f(\vect{x})\wedge c \quad \mbox{for all }c\in [f(\overline{0}),f(\overline{1})];
\end{equation}

\item[$(v)$] $f$ satisfies the dual of (\ref{eq:deltaFG}), and (\ref{eq:homogeneity}), and
\begin{equation}\label{eq:horizontal}
f(\vect{x}) = f(\vect{x}\wedge \overline{c})\vee f([\vect{x}]_c)\quad \mbox{for all }c\in [f(\overline{0}),f(\overline{1})];
\end{equation}

\item[$(vi)$] $f$ satisfies (\ref{eq:deltaFG}) and its dual, (\ref{eq:horizontal}) and its dual, and
\begin{equation}\label{eq:idempotency}
f(\overline{c}) ={c}\quad \mbox{for all }c\in [f(\overline{0}),f(\overline{1})].
\end{equation}\end{enumerate}
\end{main}

Even though not evident at the first sight, note that from the Main Theorem it follows that $(v)$ is equivalent to its dual. Note also that
every function satisfying (\ref{MedDecomposition}) is order-preserving. The equivalence between $(i)$ and $(ii)$ was first established in
\cite[Theorem 17]{Marc}. Moreover, by Fact~\ref{fact1}, it follows that the order-preservation condition is redundant for all except assertion
$(iv)$.

Let $2^{[n]}$ denote the set of all subsets of $[n]$. If $\alpha \colon 2^{[n]}\to L$ is a mapping, then
\begin{equation}\label{eq:DNF}
\bigvee_{I\subseteq [n]}\big(\alpha(I)\wedge \bigwedge_{i\in I} x_i\big)
\end{equation}
is called a \emph{disjunctive normal form} over $L$. For a function $f\colon L^{n}\rightarrow L$, let $\mathrm{DNF}(f)$ denote
 the set of those maps $\alpha \colon 2^{[n]}\to L$ for which (\ref{eq:DNF}), as an $L^n\to L$ mapping, coincides with $f$. Observe that $\mathrm{DNF}(f)=\varnothing$ if $f$ is \emph{not} a polynomial function. For $I \subseteq [n]$, let $\vect{e}_I$ be the
\emph{characteristic vector} of $I$, i.e., the tuple in $L^n$ whose $i$th component is $1$ if $i \in I$, and 0 otherwise. Define $\alpha_f
\colon 2^{[n]}\to L$, $I\mapsto f (\vect{e}_I )$.

\begin{lemma}[{Goodstein \cite{Goo67}}]\label{lemma:DNF(f)}
 If $L$ is a bounded distributive lattice and $f \colon L^n \to L$
 a polynomial function, then $\alpha_f\in\mathrm{DNF}(f)$. In particular, each polynomial function has a
disjunctive normal form representation.
\end{lemma}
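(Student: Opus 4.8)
The strategy is to prove the stronger assertion in three steps: first that every polynomial function has \emph{some} disjunctive normal form, then that the coefficients of any such form are controlled by the values $f(\vect{e}_I)$, and finally that the specific map $\alpha_f$ is always an admissible choice. For the first step, I would write $f$ as a composition of the binary operations $\wedge,\vee$, projections, and constant functions, and argue by induction on this composition (equivalently, on a term in $\{\wedge,\vee\}\cup L$ over $x_1,\dots,x_n$) that $\mathrm{DNF}(f)\neq\varnothing$. In the base cases, the projection $x_i$ is represented by the map sending $\{i\}$ to $1$ and every other subset of $[n]$ to $0$, and the constant $c$ by the map sending $\varnothing$ to $c$ and every other subset to $0$ (here boundedness of $L$ is used, both for $\bigwedge_{i\in\varnothing}x_i=1$ and for the coefficient $0$). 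For the inductive step, if $g,h$ are represented by $\beta,\gamma\colon 2^{[n]}\to L$, then $g\vee h$ is represented by $I\mapsto\beta(I)\vee\gamma(I)$, while, expanding by distributivity and regrouping the resulting finite join of meets, $g\wedge h$ is represented by $K\mapsto\bigvee_{I\cup J=K}(\beta(I)\wedge\gamma(J))$.

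For the second step, fix any $\alpha\in\mathrm{DNF}(f)$ and evaluate (\ref{eq:DNF}) at $\vect{x}=\vect{e}_I$: for $J\subseteq[n]$ one has $\bigwedge_{j\in J}(\vect{e}_I)_j=1$ if $J\subseteq I$ and $=0$ otherwise, so only the terms indexed by subsets of $I$ survive and
\[
\alpha_f(I)\;=\;f(\vect{e}_I)\;=\;\bigvee_{J\subseteq I}\alpha(J).
\]
In particular $\alpha_f$ is the ``downward accumulation'' of $\alpha$, and it is order-preserving as a map $2^{[n]}\to L$.

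For the third step, substitute the last identity into (\ref{eq:DNF}), distribute, and swap the order of the resulting double join:
\begin{align*}
\bigvee_{I\subseteq[n]}\Big(\alpha_f(I)\wedge\bigwedge_{i\in I}x_i\Big)
&=\bigvee_{I\subseteq[n]}\ \bigvee_{J\subseteq I}\Big(\alpha(J)\wedge\bigwedge_{i\in I}x_i\Big)\\
&=\bigvee_{J\subseteq[n]}\Big(\alpha(J)\wedge\bigvee_{J\subseteq I\subseteq[n]}\bigwedge_{i\in I}x_i\Big)
=\bigvee_{J\subseteq[n]}\Big(\alpha(J)\wedge\bigwedge_{j\in J}x_j\Big)=f(\vect{x}),
\end{align*}
where the penultimate equality holds because $\bigwedge_{i\in I}x_i\leqslant\bigwedge_{j\in J}x_j$ whenever $J\subseteq I$, so the inner join is attained at $I=J$. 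Hence $\alpha_f\in\mathrm{DNF}(f)$, and in particular $\mathrm{DNF}(f)\neq\varnothing$, which proves both assertions.

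None of the three steps is difficult in isolation; each is a routine manipulation of finite joins and meets in a distributive lattice. The one point that deserves attention is conceptual rather than computational: disjunctive normal forms are highly non-unique, so one must single out the canonical representative $\alpha_f$ and verify that replacing an arbitrary coefficient map by its downward accumulation leaves the represented function unchanged — which is exactly the content of the computation in the third step. (Alternatively, the first step can be bypassed by applying the median decomposition (\ref{MedDecomposition}) successively to $x_1,\dots,x_n$ and simplifying using that polynomial functions are order-preserving, which unfolds $f$ directly into the disjunctive normal form with coefficients $f(\vect{e}_I)$; but the inductive argument above is entirely self-contained.)
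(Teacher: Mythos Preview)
Your argument is correct. Each of the three steps is a valid manipulation in a bounded distributive lattice: the structural induction in Step~1 is routine (the formula $K\mapsto\bigvee_{I\cup J=K}\beta(I)\wedge\gamma(J)$ for the meet is exactly what distributivity gives), the evaluation at $\vect{e}_I$ in Step~2 is immediate, and the double-join swap in Step~3 is justified by distributivity together with the monotonicity of $I\mapsto\bigwedge_{i\in I}x_i$.

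As for comparison: the paper does not actually prove this lemma. It is stated with attribution to Goodstein~\cite{Goo67} and used as a black box; in particular, its consequence~(\ref{Goodstein}) is invoked in the proof of Proposition~\ref{prop:DNF}. So there is no ``paper's own proof'' to compare against here. Your write-up is therefore a genuine addition, and it is self-contained in the sense that it does not rely on the median decomposition~(\ref{MedDecomposition}) or on Proposition~\ref{Theorem:MAR}. Note also that your Steps~2 and~3 together essentially establish Proposition~\ref{prop:DNF} along the way (Step~2 is the ``only if'' direction verbatim, and Step~3 shows that any $\alpha$ whose downward accumulation equals $\alpha_f$ represents $f$), so your argument could be packaged to yield both Lemma~\ref{lemma:DNF(f)} and Proposition~\ref{prop:DNF} simultaneously, without the forward reference to~(\ref{Goodstein}) that the paper's proof of Proposition~\ref{prop:DNF} uses.
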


By Lemma \ref{lemma:DNF(f)}, for each polynomial function $f \colon L^n \to L$, we have that
\begin{equation}\label{Goodstein}
\mbox{$f$ is uniquely determined by its restriction to $\{0, 1\}^n$.}
\end{equation}
It is noteworthy that, by (\ref{Goodstein}), term functions are exactly those polynomial functions $f:L^n\to L$ for which $\{0\}$, $\{1\}$, and
$\{0,1\}$ constitute subalgebras of $(L,f)$. In addition to the Main Theorem, we prove the following result strengthening Lemma
\ref{lemma:DNF(f)}.

\begin{proposition}\label{prop:DNF} Let $L$ be a bounded distributive lattice, $f \colon L^n \to L$
 a polynomial function, and  $\alpha \colon 2^{[n]}\to L$ a mapping. Then $\alpha \in\mathrm{DNF}(f)$ if and only if $\bigvee_{J \subseteq I}\alpha (J)= \alpha_f(I)$ for all $I \subseteq [n]$.
 \end{proposition}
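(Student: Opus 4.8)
The plan is to reduce the whole statement to evaluating a disjunctive normal form at the characteristic vectors $\vect{e}_I$, $I\subseteq[n]$. Write $p_\alpha\colon L^n\to L$ for the mapping defined by the expression (\ref{eq:DNF}). The first step is the elementary identity
\[
p_\alpha(\vect{e}_I)=\bigvee_{J\subseteq[n]}\Bigl(\alpha(J)\wedge\bigwedge_{j\in J}(\vect{e}_I)_j\Bigr)=\bigvee_{J\subseteq I}\alpha(J)\qquad(I\subseteq[n]).
\]
This needs no distributivity: if $J\not\subseteq I$, then the inner meet has a factor $(\vect{e}_I)_j=0$ for some $j\in J\setminus I$ and the whole term is $0$; if $J\subseteq I$ (in particular if $J=\varnothing$, where the empty meet is $1$), every factor equals $1$ and the term collapses to $\alpha(J)$. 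I would also note in passing that $p_\alpha$ is itself a polynomial function --- a join of meets of constants and projections, the $I=\varnothing$ summand being just the constant $\alpha(\varnothing)$ --- so that (\ref{Goodstein}) applies to it as well as to $f$.

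For the forward direction, suppose $\alpha\in\mathrm{DNF}(f)$, i.e.\ $p_\alpha=f$. Then, evaluating at $\vect{e}_I$ and using the identity above together with $\alpha_f(I)=f(\vect{e}_I)$, one gets $\bigvee_{J\subseteq I}\alpha(J)=p_\alpha(\vect{e}_I)=f(\vect{e}_I)=\alpha_f(I)$ for every $I\subseteq[n]$, which is the asserted condition.

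For the converse, assume $\bigvee_{J\subseteq I}\alpha(J)=\alpha_f(I)$ for all $I\subseteq[n]$. Then the identity above yields $p_\alpha(\vect{e}_I)=\alpha_f(I)=f(\vect{e}_I)$ for every $I\subseteq[n]$, so $p_\alpha$ and $f$ agree on $\{\vect{e}_I:I\subseteq[n]\}=\{0,1\}^n$. Since both $p_\alpha$ and $f$ are polynomial functions, (\ref{Goodstein}) (a consequence of Lemma \ref{lemma:DNF(f)}) forces $p_\alpha=f$, that is, $\alpha\in\mathrm{DNF}(f)$.

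I do not expect a genuine obstacle here: the only points requiring a little care are the bookkeeping in the evaluation of $p_\alpha$ at $\vect{e}_I$ --- especially the empty-meet convention, which guarantees that $\alpha(\varnothing)$ is always one of the terms of $\bigvee_{J\subseteq I}\alpha(J)$ --- and the remark that $p_\alpha$ is a polynomial function, which is what lets us invoke the uniqueness statement (\ref{Goodstein}) for $p_\alpha$. Everything else is direct substitution.
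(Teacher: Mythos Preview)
Your proof is correct and follows essentially the same line as the paper's: both evaluate the disjunctive normal form at the characteristic vectors $\vect{e}_I$ to obtain $\bigvee_{J\subseteq I}\alpha(J)$, and for the converse both introduce the polynomial function determined by $\alpha$ (your $p_\alpha$, the paper's $g$) and invoke (\ref{Goodstein}) to conclude it equals $f$. Your version is just slightly more explicit about the evaluation step and the empty-meet convention.
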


Using Proposition \ref{prop:DNF}, it is straightforward to construct examples of lattices $L$ and polynomial functions $f \colon L^n \to L$ for
which $|\mathrm{DNF}(f)| > 1$, and to provide some technical conditions characterizing those polynomial functions $f$ for which
$|\mathrm{DNF}(f)| = 1$. The trivial details are left to the reader.

\section{Technicalities and proofs}

In this section we provide the proofs of the Main Theorem and Proposition \ref{prop:DNF}. First, we prove the latter.

\begin{proof}[\bf Proof of Proposition \ref{prop:DNF}]
Let $L$ be a bounded distributive lattice, $f \colon L^n \to L$ a polynomial function, and  $\alpha \colon 2^{[n]}\to L$ a mapping.

 Suppose first that $\alpha \in\mathrm{DNF}(f)$. Then, for every $I \subseteq [n]$, $\alpha_f(I)=f(\vect{e}_I)=\bigvee_{J \subseteq I}\alpha (J)$.  Now suppose that $\bigvee_{J \subseteq I}\alpha (J)= \alpha_f(I)$, for all $I \subseteq [n]$, and let $g \colon L^n \to L$ be the polynomial
 function such that $\alpha \in\mathrm{DNF}(g)$. Clearly,  for every $I \subseteq [n]$,  we have $g(\vect{e}_I)=\bigvee_{J \subseteq I}\alpha (J)=\alpha_f(I)=f(\vect{e}_I)$. From (\ref{Goodstein}) it follows that $g=f$ and hence, $\alpha \in\mathrm{DNF}(f)$.
 \end{proof}

To prove the Main Theorem we will need some auxiliary results. We proceed by focusing first on the conditions given in $(iv)$, $(v)$, and
$(vi)$.

\begin{lemma}\label{wlpHomogeneous}
Every polynomial function $f:L^n\to L$ satisfies (\ref{eq:homogeneity}) and its dual.
\end{lemma}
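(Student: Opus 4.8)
The plan is to invoke Goodstein's Lemma (Lemma~\ref{lemma:DNF(f)}): since $f$ is a polynomial function, $\alpha_f\in\mathrm{DNF}(f)$, so for all $\vect{x}\in L^n$,
\[
f(\vect{x})=\bigvee_{I\subseteq[n]}\Big(\alpha_f(I)\wedge\bigwedge_{i\in I}x_i\Big),\qquad \alpha_f(I)=f(\vect{e}_I).
\]
Here the summand indexed by $I=\varnothing$ is just $\alpha_f(\varnothing)=f(\vect{e}_\varnothing)=f(\overline{0})$, since the empty meet equals $1$. Separating this term off, write $f(\vect{x})=f(\overline{0})\vee\bigvee_{\varnothing\neq I\subseteq[n]}\big(\alpha_f(I)\wedge\bigwedge_{i\in I}x_i\big)$.

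Now fix $c\in[f(\overline{0}),f(\overline{1})]$ and $\vect{x}\in L^n$. Substituting $x_i\wedge c$ for each $x_i$ and using distributivity, I would again peel off the $I=\varnothing$ summand (which is unchanged and equals $f(\overline{0})$), and observe that in every nonempty meet at least one factor of $c$ appears, so $\bigwedge_{i\in I}(x_i\wedge c)=c\wedge\bigwedge_{i\in I}x_i$ for $I\neq\varnothing$; pulling the common factor $c$ out of the remaining join yields
\[
f(\vect{x}\wedge\overline{c})=f(\overline{0})\vee\Big(c\wedge\bigvee_{\varnothing\neq I\subseteq[n]}\big(\alpha_f(I)\wedge\textstyle\bigwedge_{i\in I}x_i\big)\Big).
\]
On the other hand, distributing $c$ over the displayed form of $f(\vect{x})$ gives
\[
f(\vect{x})\wedge c=(f(\overline{0})\wedge c)\vee\Big(c\wedge\bigvee_{\varnothing\neq I\subseteq[n]}\big(\alpha_f(I)\wedge\textstyle\bigwedge_{i\in I}x_i\big)\Big).
\]
Since $c\geqslant f(\overline{0})$, we have $f(\overline{0})\wedge c=f(\overline{0})$, so the two right-hand sides coincide, which is exactly (\ref{eq:homogeneity}).

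For the dual identity $f(\vect{x}\vee\overline{c})=f(\vect{x})\vee c$ (for $c\in[f(\overline{0}),f(\overline{1})]$), I would argue by order duality: the order dual $L^{\partial}$ of a bounded distributive lattice is again a bounded distributive lattice, every polynomial function on $L$ is a polynomial function on $L^{\partial}$ (the roles of $\wedge$ and $\vee$ being interchanged), the interval $[f(\overline{0}),f(\overline{1})]$ is unchanged as a set under this passage, and (\ref{eq:homogeneity}) applied to $f$ over $L^{\partial}$ is precisely its dual over $L$; alternatively one simply repeats the computation above with meets and joins interchanged. The only delicate point in the whole argument is the bookkeeping of the $I=\varnothing$ term of the normal form: after the substitution its meet collapses to $1$ rather than to $c$, so it must be treated separately, and it is exactly the hypothesis $c\geqslant f(\overline{0})$ that then reconciles the two sides. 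Everything else is a routine use of distributivity.
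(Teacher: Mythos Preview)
Your argument is correct and essentially identical to the paper's: both invoke Goodstein's disjunctive normal form, isolate the $I=\varnothing$ summand $f(\overline{0})$, factor the common $c$ out of the nonempty terms by distributivity, and use $c\geqslant f(\overline{0})$ to reconcile the two sides. The paper chains the equalities in one display and then says ``similarly'' for the dual, whereas you compute the two sides separately and invoke order duality for the second half, but there is no substantive difference.
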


\begin{proof}
Let $f\colon L^{n}\rightarrow L$ be a polynomial function. For any $c\in[f(\overline{0}),f(\overline{1})]$, we have
\begin{eqnarray*}
f(\vect{x}\wedge \overline{c}) &=& \bigvee_{I\subseteq [n]}\Big(\alpha_f(I)\wedge \bigwedge_{i\in I} (x_i\wedge c)\Big)%
~=~ f(\overline{0})\vee \bigvee_{\textstyle{I\subseteq [n]\atop I\neq\varnothing}}\Big(\alpha_f(I)\wedge \bigwedge_{i\in I} (x_i\wedge c)\Big)\\
&=& \Big(f(\overline{0})\vee \bigvee_{\textstyle{I\subseteq [n]\atop I\neq\varnothing}}\big(\alpha_f(I)\wedge \bigwedge_{i\in I}
x_i\big)\Big)\wedge c ~=~ f(\vect{x})\wedge c.
\end{eqnarray*}
Similarly, it follows that $f$ satisfies the dual of (\ref{eq:homogeneity}).
\end{proof}

\begin{lemma}\label{lemma:MinMaxIdem}
Let $f\colon L^{n}\rightarrow L$ be an order-preserving function. If $f$ satisfies (\ref{eq:homogeneity}) or its dual, then it satisfies
(\ref{eq:idempotency}). In particular, $\ran(f)$ coincides with $[f(\overline{0}),f(\overline{1})]$ and thus is convex.
\end{lemma}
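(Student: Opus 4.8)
The plan is to obtain (\ref{eq:idempotency}) by a single well-chosen substitution into (\ref{eq:homogeneity}) (or into its dual), and then to deduce the description of the range from order-preservation together with (\ref{eq:idempotency}).

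Suppose first that $f$ satisfies (\ref{eq:homogeneity}), and fix $c\in[f(\overline{0}),f(\overline{1})]$. Substituting $\vect{x}=\overline{1}$ into (\ref{eq:homogeneity}) and using $\overline{1}\wedge\overline{c}=\overline{c}$, we get $f(\overline{c})=f(\overline{1})\wedge c=c$, where the last equality holds because $c\leqslant f(\overline{1})$. If instead $f$ satisfies the dual of (\ref{eq:homogeneity}), namely $f(\vect{x}\vee\overline{c})=f(\vect{x})\vee c$ for all $c\in[f(\overline{0}),f(\overline{1})]$, then substituting $\vect{x}=\overline{0}$ and using $\overline{0}\vee\overline{c}=\overline{c}$ yields $f(\overline{c})=f(\overline{0})\vee c=c$, now because $c\geqslant f(\overline{0})$. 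In either case (\ref{eq:idempotency}) holds.

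For the last assertion, observe that since $f$ is order-preserving and $\overline{0}\leqslant\vect{x}\leqslant\overline{1}$ for every $\vect{x}\in L^n$, we have $f(\overline{0})\leqslant f(\vect{x})\leqslant f(\overline{1})$, so that $\{f(\vect{x}):\vect{x}\in L^n\}\subseteq[f(\overline{0}),f(\overline{1})]$. Conversely, by (\ref{eq:idempotency}) every $c\in[f(\overline{0}),f(\overline{1})]$ equals $f(\overline{c})$ and hence lies in the range of $f$. Thus the range coincides with the interval $[f(\overline{0}),f(\overline{1})]$, which is trivially convex. There is essentially no obstacle here; the only point requiring a moment's care is to choose the extreme constant vector ($\overline{1}$ in the first case, $\overline{0}$ in the second) so that its meet (resp.\ join) with $\overline{c}$ collapses to $\overline{c}$, and then to invoke the hypothesis $c\in[f(\overline{0}),f(\overline{1})]$ to simplify $f(\overline{1})\wedge c$ (resp.\ $f(\overline{0})\vee c$) to $c$.
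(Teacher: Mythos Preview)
Your proof is correct and follows exactly the same approach as the paper: substitute the extreme vector $\overline{1}$ (resp.\ $\overline{0}$) into (\ref{eq:homogeneity}) (resp.\ its dual) to obtain (\ref{eq:idempotency}), then use order-preservation together with (\ref{eq:idempotency}) to identify the range with $[f(\overline{0}),f(\overline{1})]$. You merely spell out a few details that the paper leaves implicit.
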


\begin{proof}
If $f$ satisfies (\ref{eq:homogeneity}), then for any $c\in [f(\overline{0}),f(\overline{1})]$, we have $f(\overline{c})=f(\overline{1}\wedge
\overline{c})=f(\overline{1})\wedge c=c$ and thus $f$ satisfies (\ref{eq:idempotency}). The dual statement follows similarly. The last claim
follows immediately from (\ref{eq:idempotency}).
\end{proof}

\begin{lemma}\label{lemma:new}
Let $f\colon L\to L$ be an order-preserving function. If $f$ satisfies (\ref{eq:homogeneity}) and its dual, then it preserves $\wedge$ and
$\vee$.
\end{lemma}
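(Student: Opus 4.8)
The plan is to show that, under these hypotheses, $\delta_f$ is given by the explicit formula $\delta_f(x)=(x\vee f(\overline{0}))\wedge f(\overline{1})$, and then to observe that such a map is automatically a lattice homomorphism because $L$ is distributive. Throughout, write $a=f(\overline{0})$ and $b=f(\overline{1})$. Since $f$ is order-preserving we have $a\leqslant b$, and $\delta_f$ is order-preserving as well, so $a=\delta_f(0)\leqslant\delta_f(x)\leqslant\delta_f(1)=b$ for every $x\in L$; moreover, by Lemma~\ref{lemma:MinMaxIdem}, $f$ satisfies (\ref{eq:idempotency}), that is, $\delta_f(c)=c$ for all $c\in[a,b]$.

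The first step is to evaluate $\delta_f$ along the elements $x\vee a$ and $(x\vee a)\wedge b$. Applying the dual of (\ref{eq:homogeneity}) to the constant tuple $\overline{x}$ with $c=a$ yields $\delta_f(x\vee a)=f(\overline{x}\vee\overline{a})=f(\overline{x})\vee a=\delta_f(x)\vee a=\delta_f(x)$, where the last equality uses $\delta_f(x)\geqslant a$. Applying (\ref{eq:homogeneity}) to the constant tuple $\overline{x\vee a}$ with $c=b$ then yields $\delta_f\big((x\vee a)\wedge b\big)=f(\overline{x\vee a})\wedge b=\delta_f(x\vee a)\wedge b=\delta_f(x)\wedge b=\delta_f(x)$, where the last equality uses $\delta_f(x)\leqslant b$. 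On the other hand, $(x\vee a)\wedge b$ lies in $[a,b]$ because $a=a\wedge b\leqslant(x\vee a)\wedge b\leqslant b$, so (\ref{eq:idempotency}) also gives $\delta_f\big((x\vee a)\wedge b\big)=(x\vee a)\wedge b$. Comparing the two computations, $\delta_f(x)=(x\vee a)\wedge b$ for every $x\in L$.

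It remains to note that $x\mapsto(x\vee a)\wedge b$ preserves $\wedge$ and $\vee$, which is precisely where distributivity of $L$ is used: this map is the composite of $x\mapsto x\vee a$ with $y\mapsto y\wedge b$, and each of the latter is a lattice homomorphism since $(x\wedge y)\vee a=(x\vee a)\wedge(y\vee a)$ and $(x\vee y)\wedge b=(x\wedge b)\vee(y\wedge b)$ by the distributive law. Hence $\delta_f$ preserves $\wedge$ and $\vee$. I do not anticipate a real obstacle; the only point requiring care is the bookkeeping with the bounds $a\leqslant\delta_f(x)\leqslant b$, since it is exactly this that collapses the two homogeneity identities into the stated form and that makes (\ref{eq:idempotency}) applicable.
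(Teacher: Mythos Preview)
Your proof is correct and follows essentially the same approach as the paper: both arguments use (\ref{eq:homogeneity}), its dual, and Lemma~\ref{lemma:MinMaxIdem} to derive the explicit formula $\delta_f(x)=(x\vee f(\overline{0}))\wedge f(\overline{1})=\med(f(\overline{0}),x,f(\overline{1}))$, and then appeal to distributivity to conclude that this map preserves $\wedge$ and $\vee$. The only differences are cosmetic---you separate the median into the two steps $x\mapsto x\vee a$ and $y\mapsto y\wedge b$ and verify each is a homomorphism, whereas the paper works directly with the median term and the identity $\med(a,x\wedge y,b)=\med(a,x,b)\wedge\med(a,y,b)$.
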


\begin{proof}
If $f\colon L\to L$ satisfies (\ref{eq:homogeneity}) and its dual, by Lemma~\ref{lemma:MinMaxIdem}, it satisfies (\ref{eq:idempotency}). Then,
for every $x\in L$, we have
\begin{eqnarray*}
f(x)&=&\med(f(0),f(x),f(1))=f(\med(f(0),x,f(1)))\\
&=&\med(f(0),x,f(1)).
\end{eqnarray*}
It is then immediate to see that $f$ preserves $\wedge$ and $\vee$.
 \end{proof}

\begin{lemma}\label{Hor-Min-Hom}
 Let $f\colon L^{n}\rightarrow L$ be an order-preserving function. If $f$ satisfies  (\ref{eq:homogeneity}) and its dual, then it satisfies (\ref{eq:horizontal}) and its
 dual. Moreover, $f$ satisfies (\ref{eq:deltaFG}) and its dual.
\end{lemma}

\begin{proof}
Suppose that $f$ satisfies (\ref{eq:homogeneity}) and its dual.  For any $\vect{x}\in L^n$ and any $c\in [f(\overline{0}),f(\overline{1})]$, we
have
\begin{eqnarray*}
  f(\vect{x}\wedge \overline{c})\vee f([\vect{x}]_c) &=& \big(f(\vect{x})\wedge c\big)\vee f([\vect{x}]_c)\\
  &=& \big(f(\vect{x})\vee f([\vect{x}]_c)\big)\wedge\big(c\vee f([\vect{x}]_c)\big) \\
  &=& f(\vect{x})\wedge f(\overline{c}\vee [\vect{x}]_c)%
  ~=~ f(\vect{x})
\end{eqnarray*}
and hence, $f$ satisfies (\ref{eq:horizontal}). The dual statement follows similarly.

\begin{claim}\label{cl:1}
For every $k\in [n]$ and $\vect{a}\in L^n$, the unary function $f_{-k}^{\vect{a}}$ satisfies (\ref{eq:homogeneity}) and its dual.
\end{claim}

\begin{proof}[Proof of Claim~\ref{cl:1}]
Let $k\in [n]$, $\vect{a}\in L^n$, and consider $c\in [f_{-k}^{\vect{a}}(0),f_{-k}^{\vect{a}}(1)]$. By (\ref{eq:horizontal}),
(\ref{eq:homogeneity}), and the dual of (\ref{eq:homogeneity}), for $x\in L$, we have
\begin{eqnarray*}
f_{-k}^{\vect{a}}(x\vee c) &=& f_{-k}^{\vect{a}\wedge \overline{c}}(c)\vee f_{-k}^{[\vect{a}]_c}([x\vee c]_c) ~=~ \big(f_{-k}^{\vect{a}}(1)\wedge c\big)\vee f_{-k}^{[\vect{a}]_c}([x\vee c]_c)\\
&=& c\vee f_{-k}^{[\vect{a}]_c}([x\vee c]_c) ~=~ f_{-k}^{\vect{a}\vee\overline{c}}(x\vee c)\\
&=& f_{-k}^{\vect{a}}(x)\vee c.
\end{eqnarray*}
Therefore $f_{-k}^{\vect{a}}$ satisfies the dual of (\ref{eq:homogeneity}). Similarly, we can prove that it also satisfies
(\ref{eq:homogeneity}).
\end{proof}

By Claim~\ref{cl:1}, each function $f_{-k}^{\vect{a}}$ satisfies (\ref{eq:homogeneity}) and its dual. By Lemma~\ref{lemma:new}, it preserves
$\wedge$ and $\vee$.
\end{proof}

As mentioned, the equivalence between $(i)$ and $(ii)$ in the Main Theorem was shown in \cite[Theorem 17]{Marc}. For the sake of
self-containment, we provide a simpler proof here.

\begin{proposition}[{\cite{Marc}}]\label{Theorem:MAR}
A function $f\colon L^{n}\rightarrow L$ is a polynomial function if and only if it satisfies (\ref{MedDecomposition}) for every $\vect{x} \in
L^n$ and $k\in [n]$.
\end{proposition}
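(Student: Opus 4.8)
The plan is to prove the two implications separately: for the ``only if'' direction I would invoke Goodstein's Lemma (Lemma~\ref{lemma:DNF(f)}), and for the ``if'' direction I would induct on the arity $n$.

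For the ``only if'' part, let $f\colon L^n\to L$ be a polynomial function and fix $k\in[n]$. By Lemma~\ref{lemma:DNF(f)} we may write $f(\vect{x})=\bigvee_{I\subseteq[n]}(\alpha_f(I)\wedge\bigwedge_{i\in I}x_i)$. Splitting this join according to whether $k\in I$ and using distributivity of $\wedge$ over finite joins, one gets $f(\vect{x})=A\vee(x_k\wedge B)$, where $A:=\bigvee_{k\notin I}(\alpha_f(I)\wedge\bigwedge_{i\in I}x_i)$ and $B:=\bigvee_{k\in I}(\alpha_f(I)\wedge\bigwedge_{i\in I\setminus\{k\}}x_i)$ depend only on $\vect{x}_{-k}$. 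Specializing $x_k$ to $0$ and to $1$ yields $f(\vect{x}^{0}_{k})=A$ and $f(\vect{x}^{1}_{k})=A\vee B$. It then remains to compute, using distributivity and absorption,
\[
\med(A,x_k,A\vee B)=(A\vee x_k)\wedge(A\vee B)=A\vee(x_k\wedge B)=f(\vect{x}),
\]
which is precisely~(\ref{MedDecomposition}). (This direction could alternatively be handled by structural induction on the polynomial function, the only nonroutine step being that (\ref{MedDecomposition}) is preserved under $\wedge$ and $\vee$; the DNF route is shorter.)

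For the ``if'' part, suppose $f$ satisfies~(\ref{MedDecomposition}) for all $\vect{x}\in L^n$ and all $k$, and argue by induction on $n$. For $n=1$, (\ref{MedDecomposition}) reads $f(x)=\med(f(\overline{0}),x,f(\overline{1}))$, whose right-hand side is a polynomial function of $x$, built from the constants $f(\overline{0}),f(\overline{1})$, the projection, and $\wedge$ and $\vee$. For $n>1$, apply~(\ref{MedDecomposition}) with $k=n$ to write $f(\vect{x})=\med(f_n^{0}(\vect{x}_{-n}),x_n,f_n^{1}(\vect{x}_{-n}))$, where $f_n^{0},f_n^{1}\colon L^{n-1}\to L$ are the indicated sections. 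Each of $f_n^{0}$ and $f_n^{1}$ again satisfies~(\ref{MedDecomposition}): for $1\leqslant k\leqslant n-1$, instantiating~(\ref{MedDecomposition}) for $f$ at coordinate $k$ with the $n$th coordinate fixed to $0$ (resp.\ $1$) is exactly~(\ref{MedDecomposition}) for $f_n^{0}$ (resp.\ $f_n^{1}$) at coordinate $k$, since fixing the $n$th and the $k$th coordinates commutes. By the induction hypothesis, $f_n^{0}$ and $f_n^{1}$ are polynomial functions; since $\med(y,z,w)=(y\vee z)\wedge(y\vee w)\wedge(z\vee w)$ is a term function and $x_n$ is a projection, $f$ arises from polynomial functions by composition and is thus a polynomial function.

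I do not anticipate any genuine obstacle. The single computational point is the distributive-lattice identity $\med(A,x_k,A\vee B)=A\vee(x_k\wedge B)$ --- equivalently, $\med(a,b,c)=a\vee(b\wedge c)$ whenever $a\leqslant c$ --- which is immediate from distributivity and the absorption laws; the rest is bookkeeping with the $\vect{x}_{-k}$ and $f_k^{a}$ notation together with the trivial fact that specializing distinct coordinates commutes. Note in passing that (\ref{MedDecomposition}) directly forces $f$ to be order-preserving, $\med$ being order-preserving in each argument.
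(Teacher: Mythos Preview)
Your proof is correct. The ``if'' direction is essentially the paper's argument: both observe that fixing a coordinate to a constant preserves (\ref{MedDecomposition}) and then unwind by induction on the arity, the base case $n=1$ being immediate from the form of $\med$.

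For the ``only if'' direction you take a genuinely different route. The paper argues by structural induction on the term: the class of functions satisfying (\ref{MedDecomposition}) contains the projections and constants and is closed under pointwise $\wedge$ and $\vee$, hence contains every polynomial function. You instead invoke Lemma~\ref{lemma:DNF(f)} to pass at once to a DNF, split the outer join on whether $k\in I$, and reduce to the single distributive identity $\med(A,x_k,A\vee B)=A\vee(x_k\wedge B)$. Your approach is shorter and yields the explicit decomposition $f(\vect{x})=f(\vect{x}_k^0)\vee\big(x_k\wedge f(\vect{x}_k^1)\big)$ along the way, at the cost of importing Goodstein's result as a black box; the paper's closure argument is self-contained and makes transparent which algebraic facts are actually used (only that $\med$ commutes with $\wedge$ and $\vee$ in each slot), which is arguably more in the spirit of the ``simpler proof'' the authors advertise. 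You already note this alternative yourself, correctly identifying the one nontrivial step as closure under $\wedge$ and $\vee$.
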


\begin{proof}
On the one hand, if $f\colon L^{n}\rightarrow L$ is a polynomial function, then, for every $\vect{x}\in L^n$ and $k\in [n]$, we have
$$
f(\vect{x}_k^0) = \bigvee_{\textstyle{I\subseteq [n]\atop I\not\ni k}}\Big(\alpha_f(I)\wedge \bigwedge_{i\in I} x_i\Big)
$$
and
\begin{eqnarray*}
f(\vect{x}_k^1) &=& \bigvee_{\textstyle{I\subseteq [n]\atop I\not\ni k}}\Big(\alpha_f(I)\wedge \bigwedge_{i\in I} x_i\Big)\vee \bigvee_{\textstyle{I\subseteq [n]\atop I\ni k}}\Big(\alpha_f(I)\wedge \bigwedge_{i\in I\setminus\{k\}} x_i\Big)\\
&=& f(\vect{x}_k^0)\vee \bigvee_{\textstyle{I\subseteq [n]\atop I\ni k}}\Big(\alpha_f(I)\wedge \bigwedge_{i\in I\setminus\{k\}} x_i\Big)
\end{eqnarray*}
and hence
$$
\med\big(f(\vect{x}^{0}_{k}), x_k, f(\vect{x}^{1}_{k})\big)=f(\vect{x}_k^0)\vee\big(x_k\wedge f(\vect{x}_k^1)\big)=f(\vect{x}),
$$
which shows that $f$ satisfies (\ref{MedDecomposition}) for every $\vect{x} \in
L^n$ and $k\in [n]$.

%

On the other hand, any function obtained from a function in this class by substituting constants for variables, is also in the class. Thus, if a  function $f\colon L^{n}\rightarrow L$ satisfies (\ref{MedDecomposition}) for every $\vect{x} \in L^n$ and $k\in [n]$, then by repeated applications of  (\ref{MedDecomposition}), we can easily verify that $f$ can be obtained by composition of the binary operations $\vee$  and
$\wedge$, the projections, and the constant functions. That is, $f$ is a polynomial function.
\end{proof}

Now we focus on the conditions given in $(iii)$, $(v)$, and $(vi)$ of the Main Theorem. We shall make use of the following general result.

\begin{lemma}\label{lemma:strSIdem}
Let $C$ be a class of functions $f\colon L^n\to L$ $(n\geqslant 1)$ such that
\begin{enumerate}
\item[(i)] the unary members of $C$ are polynomial functions;

\item[(ii)] for $n>1$, any unary function obtained from an $n$-ary function $f$ in $C$ by substituting constants for $n-1$ variables of $f$ is
also in $C$.
\end{enumerate}
Then $C$ is a class of polynomial functions.
\end{lemma}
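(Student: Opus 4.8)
The plan is to reduce everything to Proposition~\ref{Theorem:MAR} by showing that every member of $C$ satisfies the median decomposition~(\ref{MedDecomposition}). First I would dispose of the trivial case $n=1$: here $f$ is itself a unary member of $C$, hence a polynomial function by hypothesis~(i), and there is nothing to prove. So the content lies in the case $n>1$.

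Fix an arbitrary $f\in C$ with $n>1$, a point $\vect{x}\in L^n$, and an index $1\leqslant k\leqslant n$. I would introduce the unary function $g\colon L\to L$ obtained from $f$ by substituting the constant $x_i$ for the $i$th variable for every $i\neq k$, so that $g(x_k)=f(\vect{x})$, $g(0)=f(\vect{x}^0_k)$, and $g(1)=f(\vect{x}^1_k)$. By hypothesis~(ii) we have $g\in C$, and then by hypothesis~(i) $g$ is a (unary) polynomial function. Applying the ``only if'' direction of Proposition~\ref{Theorem:MAR} to $g$ in the one-variable case gives $g(t)=\med\big(g(0),t,g(1)\big)$ for every $t\in L$; specializing $t=x_k$ yields precisely~(\ref{MedDecomposition}) at $(\vect{x},k)$.

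Since $\vect{x}$ and $k$ were arbitrary, $f$ satisfies~(\ref{MedDecomposition}) for every $\vect{x}\in L^n$ and every $1\leqslant k\leqslant n$, so the ``if'' direction of Proposition~\ref{Theorem:MAR}, now in its $n$-variable form, shows that $f$ is a polynomial function. As $f\in C$ was arbitrary, $C$ is a class of polynomial functions.

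There is no real obstacle here; the proof is essentially mechanical once one notices that Proposition~\ref{Theorem:MAR} is needed in \emph{both} directions and that the unary restrictions of $f$ are exactly the objects the hypotheses control. The only points requiring a little care are using the two hypotheses in the correct order — first~(ii) to land back inside $C$ after substituting constants, then~(i) to conclude the resulting unary function is polynomial — and observing that invoking Proposition~\ref{Theorem:MAR} for a one-variable polynomial function is legitimate.
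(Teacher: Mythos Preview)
Your argument is correct and essentially identical to the paper's: both freeze all but one variable to obtain a unary function in $C$, invoke (ii) then (i) to conclude it is polynomial, apply Proposition~\ref{Theorem:MAR} in the unary case to get the median decomposition pointwise, and then apply the $n$-ary direction of Proposition~\ref{Theorem:MAR} to conclude. The only differences are notational (the paper writes $f_{-k}^{\vect{a}}$ for your $g$ and uses $\vect{a}$ for the fixed point).
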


\begin{proof}
Let $C$ be a class of functions satisfying the conditions of the lemma. We show that each $f\colon L^n\to L$ in $C$ is a polynomial function. By
condition $(i)$, the claim holds for $n=1$. So suppose that $n>1$. By Proposition~\ref{Theorem:MAR}, it is enough to show that $f$ satisfies
(\ref{MedDecomposition}) for every $\vect{x} \in L^n$ and $k\in [n]$. So let $\vect{a}\in L^n$ and $k\in [n]$. By condition $(ii)$, we have that $f_{-k}^{\vect{a}}\in C$, and hence
$f_{-k}^{\vect{a}}$ is a polynomial function. By Proposition~\ref{Theorem:MAR}, $f_{-k}^{\vect{a}}$  satisfies (ii) of the Main Theorem,  and
hence,
$$
f(\vect{a})=f_{-k}^{\vect{a}}(a_k)=\med\big(f_{-k}^{\vect{a}}(0),a_k,f_{-k}^{\vect{a}}(1)\big)=\med\big(f(\vect{a}_{k}^0),
a_k,f(\vect{a}_{k}^1)\big).
$$
Since the above holds for every $\vect{a}\in L^n$ and $k\in [n]$, it follows that $f$  satisfies (ii) of the Main Theorem, and thus it is a
polynomial function.
\end{proof}

Note that, for $n=1$, (\ref{eq:StrId}) reduces to the well-known \emph{idempotency equation} $f\circ f=f$; see for instance Kuczma et
al.~\cite[\S{11.9E}]{KucChoGer90}.

\begin{proposition}\label{prop:IdEq}
A unary function $f\colon L\to L$ is a polynomial function if and only if $\ran(f)$ is convex and $f$ is a solution of the idempotency equation
that preserves $\wedge$ and $\vee$.
\end{proposition}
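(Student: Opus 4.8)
The plan is to reduce the statement to the single canonical form of a unary polynomial function, namely that $f\colon L\to L$ is a polynomial function if and only if $f(x)=f(0)\vee(f(1)\wedge x)$ for all $x\in L$ (with $f(0)\leqslant f(1)$), and then to read off each of the three stated properties from this form and, conversely, to recover this form from the three properties.

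For the ``only if'' direction I would first invoke Lemma~\ref{lemma:DNF(f)}: since $f$ is a polynomial function, $\alpha_f\in\mathrm{DNF}(f)$, and for $n=1$ the defining identity~(\ref{eq:DNF}) reads exactly $f(x)=f(0)\vee(f(1)\wedge x)$; moreover $f(0)\leqslant f(1)$ since polynomial functions are order-preserving. From this representation, preservation of $\wedge$ and of $\vee$ and the idempotency equation $f\circ f=f$ are routine computations in the distributive lattice $L$ using $f(0)\leqslant f(1)$; alternatively, preservation of $\wedge$ and $\vee$ follows from Lemmas~\ref{wlpHomogeneous} and~\ref{lemma:new} applied to $\delta_f=f$. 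Convexity of the range follows from Lemmas~\ref{wlpHomogeneous} and~\ref{lemma:MinMaxIdem}, which give $\{f(x):x\in L\}=[f(0),f(1)]$.

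For the ``if'' direction, assume $f$ preserves $\wedge$ and $\vee$ (hence is order-preserving, so $f(0)\leqslant f(x)\leqslant f(1)$ for all $x$), is idempotent, and has convex range. Since $f(0)$ and $f(1)$ lie in the range and are respectively its least and greatest elements, convexity yields $[f(0),f(1)]\subseteq\{f(x):x\in L\}$. Fix $x\in L$ and put $y=f(0)\vee(f(1)\wedge x)$; then $y\in[f(0),f(1)]$, so $y=f(t)$ for some $t\in L$, whence $f(y)=f(f(t))=f(t)=y$ by idempotency. On the other hand, expanding $f(y)$ via $\vee$-preservation, then $\wedge$-preservation, and idempotency gives $f(y)=f(f(0))\vee\bigl(f(f(1))\wedge f(x)\bigr)=f(0)\vee(f(1)\wedge f(x))=f(0)\vee f(x)=f(x)$, where the last two equalities use $f(x)\leqslant f(1)$ and $f(0)\leqslant f(x)$. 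Comparing the two evaluations of $f(y)$ gives $f(x)=f(0)\vee(f(1)\wedge x)$, so $f$ is a polynomial function.

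I do not expect a genuine obstacle; the two points needing care are (a) extracting $[f(0),f(1)]\subseteq\{f(x):x\in L\}$ from convexity, which relies on the observation that order-preservation pins $f(0)$ and $f(1)$ as the extreme values of $f$, and (b) the bookkeeping in the chain of equalities evaluating $f(y)$, where each step must cite the correct hypothesis among join-preservation, meet-preservation, and idempotency. Everything else is a direct calculation in a distributive lattice.
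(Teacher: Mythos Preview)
Your proof is correct and follows essentially the same approach as the paper: both directions hinge on the identity $f(x)=\med(f(0),x,f(1))=f(0)\vee(f(1)\wedge x)$, and in the converse both arguments show that this target value is a fixed point of $f$ (via convexity of the range and idempotency) and that applying $f$ to it yields $f(x)$ (via $\wedge,\vee$-preservation and idempotency). The only cosmetic difference is that the paper writes the target as a median and invokes Proposition~\ref{Theorem:MAR} for the forward direction, while you use the equivalent disjunctive form and cite Lemma~\ref{lemma:DNF(f)}.
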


\begin{proof}
By Proposition~\ref{Theorem:MAR}, every unary polynomial function $f\colon L\to L$ is of the form $f(x)=\med(f(0),x,f(1))$ and thus satisfies
the conditions stated in the proposition.

Conversely, let $f\colon L\to L$ be a solution of the idempotency equation that preserves $\wedge$ and $\vee$ and such that $\ran(f)$ is convex,
and let $x\in L$. If $x\in [f(0),f(1)]=\ran(f)$, then there is $z\in L$ such that $x=f(z)$ and hence
$$
f(x)=f(f(z))=f(z)=x=\med(f(0),x,f(1)).
$$
Otherwise, let
$z=\med(f(0),x,f(1))\in  [f(0),f(1)]$. Then, since $f$ preserves $\wedge$ and $\vee$, it is order-preserving, and we have
\begin{eqnarray*}
f(x)&=&\med\big (f(0),f(x),f(1)\big )=\med\big(f(f(0)),f(x),f(f(1))\big)\\
&=&f\big(\med(f(0),x,f(1))\big)=f(z)=z=\med(f(0),x,f(1)),
\end{eqnarray*}
which shows that $f$ is a polynomial function.
\end{proof}

\begin{proposition}\label{cor:StrIdemWLP}
An order-preserving function $f\colon L^{n}\rightarrow L$ is a polynomial function if and only if $f$ satisfies (\ref{eq:deltaFG}), its dual,
(\ref{eq:StrId}), and the sets $\ran(f)$ and $\ran(f_{-k}^{\vect{a}})$, for every $\vect{a}\in L^n$ and $k\in [n]$, are convex.
\end{proposition}

\begin{proof}
By Fact~\ref{fact1} and Lemmas~\ref{wlpHomogeneous} and \ref{lemma:MinMaxIdem}, if $f$ is a polynomial function, then $f$ satisfies
(\ref{eq:deltaFG}), its dual, and $\ran(f)$ is convex. By Proposition~\ref{Theorem:MAR}, every polynomial function $f$ satisfies (\ref{MedDecomposition}) (and hence (\ref{eq:StrId})) and each set $\ran(f_{-k}^{\vect{a}})$, for $\vect{a}\in L^n$ and $k\in [n]$, is convex.

To prove the converse claim, consider the class $C$ of order-preserving functions $f\colon L^n\to L$ $(n\geqslant 1)$ satisfying the conditions
of the proposition. Clearly, $C$ satisfies condition $(ii)$ of Lemma~\ref{lemma:strSIdem}. By Proposition~\ref{prop:IdEq}, $C$ satisfies also
condition $(i)$ of Lemma~\ref{lemma:strSIdem}, and hence, $C$ is a class of polynomial functions.
\end{proof}

\begin{proposition}\label{prop:unaryHorizontalPoly}
Let $f\colon L\to L$ be a function. The following conditions are equivalent:
\begin{enumerate}
\item[(i)] $f$ is a polynomial function;

\item[(ii)] $f$ satisfies (\ref{eq:homogeneity}) and preserves $\vee$;

\item[(iii)] $f$ satisfies (\ref{eq:idempotency}) and preserves $\wedge$ and $\vee$.
\end{enumerate}
\end{proposition}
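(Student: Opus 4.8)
The plan is to establish the cycle $(i)\Rightarrow(iii)\Rightarrow(ii)\Rightarrow(i)$; the first two implications are short and the real content sits in $(ii)\Rightarrow(i)$. For $(i)\Rightarrow(iii)$ I would invoke Proposition~\ref{Theorem:MAR} to write a unary polynomial function as $f(x)=\med(f(0),x,f(1))$; since $L$ is distributive, a median with its two outer arguments held fixed distributes over $\wedge$ and over $\vee$, so $f$ preserves both operations, and for $c\in[f(0),f(1)]$ we get $f(c)=\med(f(0),c,f(1))=c$, i.e.\ (\ref{eq:idempotency}). (Alternatively this follows by stringing together Lemmas~\ref{wlpHomogeneous}, \ref{lemma:MinMaxIdem}, and \ref{lemma:new}, using that $\delta_f=f$ for unary $f$.) For $(iii)\Rightarrow(ii)$, join preservation is already assumed, and for $c\in[f(0),f(1)]$ meet preservation together with (\ref{eq:idempotency}) gives $f(x\wedge c)=f(x)\wedge f(c)=f(x)\wedge c$, which is (\ref{eq:homogeneity}).

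The heart of the argument is $(ii)\Rightarrow(i)$. First note that $\vee$-preservation makes $f$ order-preserving, so Lemma~\ref{lemma:MinMaxIdem} applies and yields both (\ref{eq:idempotency}) and $\{f(x):x\in L\}=[f(0),f(1)]$. Now fix $x\in L$. Since $f(0)\leqslant f(1)$, the element $c:=\med(f(0),x,f(1))$ simplifies to $(f(0)\vee x)\wedge f(1)$, which lies in $[f(0),f(1)]$. Using $\vee$-preservation and $f(f(0))=f(0)$ (from (\ref{eq:idempotency})) we have $f(f(0)\vee x)=f(0)\vee f(x)=f(x)$, because $f(0)\leqslant f(x)$; applying (\ref{eq:homogeneity}) with parameter $f(1)$ then gives $f(c)=f(f(0)\vee x)\wedge f(1)=f(x)\wedge f(1)=f(x)$. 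On the other hand $c\in[f(0),f(1)]$ forces $f(c)=c$ by (\ref{eq:idempotency}). Hence $f(x)=\med(f(0),x,f(1))$ for all $x\in L$, and Proposition~\ref{Theorem:MAR} concludes that $f$ is a polynomial function.

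The step I expect to be the main obstacle is precisely this last one: from (\ref{eq:homogeneity}) and $\vee$-preservation alone one has to recover the full median representation, even though (\ref{eq:homogeneity}) only records meet-like behavior for parameters inside $[f(0),f(1)]$. The device that makes it work is the identity $f(\med(f(0),x,f(1)))=f(x)$, obtained by peeling off the outer join via $\vee$-preservation and the outer meet via (\ref{eq:homogeneity}); idempotency on $[f(0),f(1)]$ then collapses $f(x)$ onto $\med(f(0),x,f(1))$. One could instead route $(ii)\Rightarrow(i)$ through Proposition~\ref{prop:IdEq} after extracting $\wedge$-preservation, but deriving the median form directly is shorter and avoids checking the idempotency equation separately.
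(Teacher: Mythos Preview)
Your proposal is correct and follows essentially the same route as the paper. The paper runs the identical cycle $(i)\Rightarrow(iii)\Rightarrow(ii)\Rightarrow(i)$, obtaining $(iii)\Rightarrow(ii)$ exactly as you do and proving $(ii)\Rightarrow(i)$ by the same computation you describe, namely showing $f(x)=\med(f(0),f(x),f(1))=f(\med(f(0),x,f(1)))=\med(f(0),x,f(1))$ via $\vee$-preservation, (\ref{eq:homogeneity}) with parameter $f(1)$, and (\ref{eq:idempotency}); the only cosmetic difference is that the paper routes $(i)\Rightarrow(iii)$ through Lemmas~\ref{wlpHomogeneous} and~\ref{lemma:new}, which you already mention as an alternative.
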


\begin{proof}
The implication $(i)\Rightarrow (iii)$ follows from Lemmas~\ref{wlpHomogeneous}, \ref{lemma:MinMaxIdem}, and \ref{lemma:new}. The implication $(iii)\Rightarrow (ii)$
follows from the fact that if $f$ satisfies (\ref{eq:idempotency}) and preserves $\wedge$ then it satisfies (\ref{eq:homogeneity}). Finally, to
see that the implication $(ii)\Rightarrow (i)$ holds, observe first that (\ref{eq:homogeneity}) implies (\ref{eq:idempotency}) by
Lemma~\ref{lemma:MinMaxIdem}. Since $f$ preserves $\vee$, we have that $f$ satisfies the dual of (\ref{eq:homogeneity}). Moreover, $f$ is
clearly order-preserving, and we have
\begin{eqnarray*}
f(x) &=& \med\big(f(0),f(x),f(1)\big)=f(0)\vee\big(f(x)\wedge f(1)\big)\\
&=& f\big(f(0)\vee(x\wedge f(1))\big) = f\big(\med(f(0),x,f(1))\big)\\
&=& \med(f(0),x,f(1)),
\end{eqnarray*}
which shows that $f$ is a polynomial function.
\end{proof}

\begin{proposition}\label{cor:HorizontalPoly2}
An order-preserving function $f\colon L^{n}\rightarrow L$ is a polynomial function if and only if $f$ satisfies the dual of (\ref{eq:deltaFG}),
and (\ref{eq:homogeneity}), and (\ref{eq:horizontal}).
\end{proposition}

\begin{proof}
By Lemmas~\ref{wlpHomogeneous} and \ref{Hor-Min-Hom} it follows that the conditions are necessary.

To prove the converse claim, we make use of Lemma~\ref{lemma:strSIdem}. Let $C$ be the class of order-preserving functions $f\colon L^n\to L$
$(n\geqslant 1)$ satisfying the conditions of the proposition. By Proposition~\ref{prop:unaryHorizontalPoly}, $C$ satisfies condition $(i)$ of
Lemma~\ref{lemma:strSIdem}. To complete the proof, it is enough to show that (\ref{eq:homogeneity}) and (\ref{eq:horizontal}) are preserved
under substituting constants for $n-1$ variables, since then condition $(ii)$ of Lemma~\ref{lemma:strSIdem} will be also fulfilled. Thus, take
$f\colon L^{n}\rightarrow L$ satisfying (\ref{eq:homogeneity}) and (\ref{eq:horizontal}).

Let $k\in [n]$ and $\vect{a}\in L^n$. To see that $f_{-k}^{\vect{a}}$ satisfies (\ref{eq:homogeneity}), just note that if $x\in L$ and $c\in
[f_{-k}^{\vect{a}}(0), f_{-k}^{\vect{a}}(1)]$ then,
\begin{eqnarray*}
f_{-k}^{\vect{a}}(x\wedge c)&=& f_{-k}^{\vect{a}\wedge\overline{c}}(x\wedge c)\vee f_{-k}^{[\vect{a}]_c}(0)\qquad \mbox{(by (\ref{eq:horizontal}))}\\
&=& (f_{-k}^{\vect{a}}(x)\wedge{c})\vee f_{-k}^{[\vect{a}]_c}(0)\qquad \mbox{(by (\ref{eq:homogeneity}))}\\
&=& f_{-k}^{\vect{a}}(x)\wedge{c}.
\end{eqnarray*}
To see that $f_{-k}^{\vect{a}}$ satisfies (\ref{eq:horizontal}), let $x\in L$ and $c\in [f_{-k}^{\vect{a}}(0), f_{-k}^{\vect{a}}(1)]$. We
clearly have $f(\vect{a}_k^x\wedge \overline{c})\leqslant f_{-k}^{\vect{a}}(x\wedge c)$ and $f([\vect{a}_k^x]_c)\leqslant
f_{-k}^{\vect{a}}([x]_c)$. Hence, by (\ref{eq:horizontal}), we get
$$
f_{-k}^{\vect{a}}(x)=f(\vect{a}_k^x\wedge \overline{c})\vee f([\vect{a}_k^x]_c)\leqslant f_{-k}^{\vect{a}}(x\wedge c)\vee
f_{-k}^{\vect{a}}([x]_c)\leqslant f_{-k}^{\vect{a}}(x).
$$
Since the above holds for every $x\in L$, we have that $f_{-k}^{\vect{a}}$ satisfies (\ref{eq:horizontal}).
\end{proof}


\begin{proposition}\label{cor:HorizontalPoly}
An order-preserving function $f\colon L^{n}\rightarrow L$ is a polynomial function if and only if $f$ satisfies (\ref{eq:deltaFG}) and its dual,
(\ref{eq:horizontal}) and its dual, and (\ref{eq:idempotency}).
\end{proposition}

\begin{proof}
By Lemmas~\ref{wlpHomogeneous}, \ref{lemma:MinMaxIdem}, and \ref{Hor-Min-Hom}, it follows that the conditions are necessary.

To prove the converse claim, we make use of Lemma~\ref{lemma:strSIdem}. Let $C$ be the class of order-preserving functions $f\colon L^n\to L$
$(n\geqslant 1)$ satisfying the conditions of the proposition. By Proposition~\ref{prop:unaryHorizontalPoly}, $C$ satisfies condition $(i)$ of
Lemma~\ref{lemma:strSIdem}. To complete the proof, it is enough to show that (\ref{eq:horizontal}), its dual, and (\ref{eq:idempotency}) are
preserved under substituting constants for $n-1$ variables, since then condition $(ii)$ of Lemma~\ref{lemma:strSIdem} will be also fulfilled.
Thus, take $f\colon L^{n}\rightarrow L$ satisfying (\ref{eq:horizontal}), its dual, and (\ref{eq:idempotency}).

Let $k\in [n]$ and $\vect{a}\in L^n$. To see that $f_{-k}^{\vect{a}}$ satisfies (\ref{eq:idempotency}), just note that if $c\in
[f_{-k}^{\vect{a}}(0), f_{-k}^{\vect{a}}(1)]$ then, by (\ref{eq:horizontal}) and its dual,
$$
f_{-k}^{\vect{a}}(c)=f_{-k}^{\vect{a}\wedge\overline{c}}(c)\vee f_{-k}^{[\vect{a}]_c}(0)\leqslant c\leqslant
f_{-k}^{\vect{a}\vee\overline{c}}(c)\wedge f_{-k}^{[\vect{a}]^c}(1) =f_{-k}^{\vect{a}}(c).
$$
The proof that $f_{-k}^{\vect{a}}$ satisfies (\ref{eq:horizontal}) follows exactly the same steps as in the proof of Proposition
\ref{cor:HorizontalPoly2}.
 The dual claim follows similarly.
\end{proof}

We can now provide a proof of the Main Theorem.

\begin{proof}[\bf Proof of the Main Theorem]
The equivalences $(i)\Leftrightarrow (ii)\Leftrightarrow (iii)$ are given by Propositions~\ref{Theorem:MAR} and \ref{cor:StrIdemWLP}. The
equivalence $(i)\Leftrightarrow (iv)$ follows from
Lemmas~\ref{wlpHomogeneous} and \ref{Hor-Min-Hom}, and Proposition \ref{cor:HorizontalPoly2}.
The equivalences $(i)\Leftrightarrow (v)$ and $(i)\Leftrightarrow (vi)$ follow from
Propositions \ref{cor:HorizontalPoly2} and
\ref{cor:HorizontalPoly}, respectively.
\end{proof}

\section{Concluding remarks}

By the equivalence $(i)\Leftrightarrow (iii)$, polynomial functions $f\colon L^{n}\rightarrow L$ with $f (\overline{0}) = 0$ and $f
(\overline{1}) = 1$ coincide exactly with those for which $\ran(f)=L$. These are referred to as \emph{discrete Sugeno integrals} and were
studied in \cite{Mar00c} where equivalence $(i)\Leftrightarrow (iv)$ of the Main Theorem was established for this particular case when $L$ is an
interval of the real line. Also, the implication $(i)\Rightarrow (v)$ of the Main Theorem reduces to that established by Benvenuti, Mesiar, and
Vivona \cite{BenMesViv02} when $L$ is an interval of the real line, since in this case the dual of (\ref{eq:deltaFG}) becomes redundant.
Condition (\ref{eq:homogeneity}) and its dual, when strengthened to all $c\in L$, are referred to as \emph{$\wedge$-homogeneity} and
\emph{$\vee$-homogeneity}, respectively; see \cite{GraMarMesPap09}. These were used by Fodor and Roubens~\cite{FodRou95} to axiomatize certain
classes of aggregation functions over the reals.

Recall that the property of being order-preserving is a consequence of all except assertion $(iv)$ of the Main Theorem. Also, given the nature
of statements $(iii)$--$(vi)$, it is natural to ask whether the equivalences between these and $(i)$ continue to hold over non-distributive
lattices. The reader can easily verify that (already for unary polynomial functions) this is not the case.

\end{document}